\newtheorem{theorem}{Theorem}[section]
\newtheorem{lemma}[theorem]{Lemma}
\newtheorem{proposition}[theorem]{Proposition}
\newtheorem{corollary}[theorem]{Corollary}
\newtheorem{question}[theorem]{Question}
\theoremstyle{definition}
\newtheorem{example}[theorem]{Example}
\newtheorem{remark}[theorem]{Remark}
\numberwithin{equation}{section}
\numberwithin{figure}{section}
\numberwithin{table}{section}
\newcommand{\comment}[1]{}
\newcommand{\bdry}{\ensuremath{\partial}}
\newcommand{\nbhd}{\ensuremath{\mathcal{N}}}
\renewcommand{\)}{\textup{)}}
\begin{document}
\baselineskip 14pt

\title{Tight fibered knots and band sums}

\author{Kenneth L. Baker and Kimihiko Motegi}

\address{Department of Mathematics, University of Miami, 
Coral Gables, FL 33146, USA}
\email{k.baker@math.miami.edu}
\address{Department of Mathematics, Nihon University, 
3-25-40 Sakurajosui, Setagaya-ku, 
Tokyo 156--8550, Japan}
\email{motegi@math.chs.nihon-u.ac.jp}

\thanks{The first named author was partially supported by a grant from the Simons Foundation (\#209184 to Kenneth L.\ Baker).  
The second named author has been partially supported by Japan Society for the Promotion of Science, 
Grants--in--Aid for Scientific Research (C), 26400099 and Joint Research Grant of Institute of Natural Sciences at Nihon University for 2015.}

\dedicatory{}

\begin{abstract}
We give a short proof that if a non-trivial band sum of two knots results in a tight fibered knot, 
then the band sum is a connected sum. 
In particular, this means that any prime knot obtained by a non-trivial band sum is not tight fibered. 
Since a positive L-space knot is tight fibered, 
a non-trivial band sum never yields an L-space knot.  
Consequently, any knot obtained by a non-trivial band sum cannot admit a finite surgery.

For context, 
we exhibit two examples of non-trivial band sums of tight fibered knots producing prime knots: one is fibered but not tight, 
and the other is strongly quasipositive but not fibered. 
\end{abstract}

\maketitle

{
\renewcommand{\thefootnote}{}
\footnotetext{2010 \textit{Mathematics Subject Classification.}
Primary 57M25, 57M27
\footnotetext{ \textit{Key words and phrases.}
tight fibered knot, strongly quasipositive knot, L-space knot, band sum}
}
}

\section{Introduction}
\label{section:Introduction}

Let $K_1 \sqcup K_2$ be a $2$-component split link in the $3$-sphere $S^3$. 
Let $\beta\colon [0, 1] \times [0, 1] \to S^3$ be an embedding such that 
$\beta([0, 1] \times [0, 1]) \cap K_1 =  \beta([0, 1] \times \{ 0\})$ and 
$\beta([0, 1] \times [0, 1]) \cap K_2 =  \beta([0, 1] \times \{ 1\})$. 
Then we obtain a knot $K_1 \natural_{\beta} K_2$ by replacing $\beta([0, 1] \times \{0, 1\})$ in $K_1 \cup K_2$ 
with $\beta(\{0, 1\} \times [0, 1])$. 
We call $K_1 \natural_{\beta} K_2$ a \textit{band sum} of $K_1$ and $K_2$ with the band $\beta$. 
In the following, 
for simplicity,  
we use the same symbol $\beta$ to denote the image 
$\beta([0, 1] \times [0, 1])$. 
We say that a band sum is \textit{trivial} if one of $K_1$ and $K_2$, say $K_2$, 
is the unknot and the band $\beta$ gives just a connected sum.  
If the band sum is trivial, then obviously $K_1 \natural_\beta K_2 = K_1$. 
The converse also holds, i.e.\ if $K_1 \natural_\beta K_2 = K_1$, 
then the band sum is trivial; see \cite{gabaiband, Sch}. 
A band sum is regarded as a natural generalization of the connected sum, 
and many prime knots are obtained by band sums; see \cite{kobayashi2}.

We say a fibered knot in $S^3$ is {\em tight} if, 
as an open book for $S^3$, it supports the positive tight contact structure on $S^3$.   
A knot in $S^3$ is {\em strongly quasipositive} if it is the boundary of a {\em quasipositive} Seifert surface, 
a special kind of Seifert surface obtained from parallel disks by attaching positive bands in a particular kind of braided manner, 
for a more precise definition see 
e.g.\ \cite[61.Definition]{Rudolph_HandBook}. 
It is shown by Rudolph \cite[Characterization Theorem]{Rudolph_Topology} (cf.\ \cite[90.Theorem]{Rudolph_HandBook}) 
that a knot $K$ is strongly quasipositive if and only if it has a Seifert surface that is a subsurface of the fiber of some positive torus knot. 
Hedden proved that tight fibered knots are precisely the fibered strongly quasipositive knots \cite[Proposition~2.1]{hedden}; 
Baader-Ishikawa \cite[Theorem~3.1]{BI} provides an alternative proof.  
From this correspondence, 
one may observe that a connected sum of two tight fibered knots is again a tight fibered knot.
(Of course this follows more directly from the definitions of connected sums of contact manifolds and contact structures supported by open books, e.g.\ \cite{etnyre, geiges}.)

The aim of this note is to prove: 

\begin{theorem}
\label{thm:main}
If a tight fibered knot in $S^3$ is a non-trivial band sum, 
then the band sum expresses the knot as a connected sum. 
In particular, it is not prime. 
\end{theorem}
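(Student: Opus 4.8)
The plan is to pin down the fiber of $K = K_1 \natural_\beta K_2$ as the obvious band-sum surface, and then to use the extra positivity carried by \emph{tightness} (as opposed to mere fiberedness) to recognize the band as a connected-sum band.

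First I would build the natural Seifert surface. Choosing minimal genus Seifert surfaces $F_1, F_2$ for $K_1, K_2$ and taking them disjoint (possible since $K_1 \sqcup K_2$ is split), attaching the band $\beta$ as a $1$-handle produces a Seifert surface $F = F_1 \cup \beta \cup F_2$ for $K$ with $g(F) = g(K_1) + g(K_2)$, so $g(K) \le g(K_1) + g(K_2)$. For the reverse inequality I would invoke Gabai's superadditivity of genus under band sum, $g(K) \ge g(K_1) + g(K_2)$; together these force equality and show $F$ is a minimal genus Seifert surface. Since $K$ is fibered, its minimal genus Seifert surface is unique and is the fiber, so $F$ is \emph{the} fiber of $K$ and each $F_i$ is a minimal genus Seifert surface for $K_i$.

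Next I would push tightness down to the summands. As $K$ is tight fibered, by Hedden it is strongly quasipositive, so by Rudolph its fiber $F$ is isotopic to a subsurface of the fiber $\Sigma$ of a positive torus knot. Then $F_1, F_2 \subset F \subset \Sigma$ are themselves subsurfaces of $\Sigma$, so $K_1$ and $K_2$ are strongly quasipositive; combined with a sutured-manifold argument (in the spirit of Gabai) that fiberedness of $F$ descends to the pieces cut off by the cocore of the band, each $K_i$ is itself tight fibered. At this stage $F$ is a quasipositive fiber carrying a monodromy $\phi$ that is a product of right-handed Dehn twists.

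The heart of the argument, and the step I expect to be the main obstacle, is to promote this to a genuine connected-sum decomposition. Let $\gamma \subset F$ be the cocore of the band $\beta$; it is a properly embedded arc separating $F$ into copies of $F_1$ and $F_2$. If the monodromy preserves $\gamma$ up to isotopy, then $\gamma$ sweeps out, in the mapping torus, a $2$-sphere meeting $K$ in two points that exhibits $K$ as the connected sum $K_1 \# K_2$ with restricted monodromies $\phi|_{F_i}$; this is precisely the statement that the band is a connected-sum band. The content is therefore to show $\phi(\gamma) = \gamma$, and here mere fiberedness is insufficient, as the paper's examples of non-trivial prime fibered band sums demonstrate. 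The positivity forced by tightness is what rules these out: I would argue that the right-veering property of $\phi$ (and of $\phi^{-1}$), coming from $K$ supporting the tight contact structure on $S^3$, pins the cocore arc in place, so that $\phi$ respects the decomposition $F = F_1 \cup \beta \cup F_2$. Once $K = K_1 \# K_2$ is established, non-triviality of the band sum finishes the proof: were one summand, say $K_2$, trivial, then $K = K_1$, i.e. $K_1 \natural_\beta K_2 = K_1$, and by Gabai–Scharlemann the band sum would be trivial, a contradiction. Hence both summands are non-trivial and $K$ is not prime.
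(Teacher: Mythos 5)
There is a genuine gap at the very first step, and it is fatal to the whole architecture of your argument. You claim that choosing minimal genus Seifert surfaces $F_1, F_2$ for $K_1, K_2$ and attaching the band produces a Seifert surface $F = F_1 \cup \beta \cup F_2$ for $K$, hence $g(K) \le g(K_1)+g(K_2)$. But the band $\beta$ is only required to meet $K_1 \cup K_2$ in its attaching arcs; its interior may intersect the interiors of $F_1$ and $F_2$, and in general it cannot be isotoped off them. Consequently $F$ need not be embedded and the inequality $g(K)\le g(K_1)+g(K_2)$ is simply false for general band sums: genus is only \emph{super}additive (Gabai, Scharlemann), and strict inequality occurs. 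The paper's own Example~\ref{fibered_bund sum} is a counterexample to your step: $8_{10} = T_{2,3} \natural_\beta U$ has genus $3 > 1 + 0$. The existence of a Seifert surface of the form $F_1\cup\beta\cup F_2$ realizing the genus is a \emph{consequence} of the equality $g(K)=g(K_1)+g(K_2)$, not a free construction one can use to prove it. Establishing that equality is exactly where tightness must enter, and the paper does it by an entirely different mechanism you do not touch: $K$ is concordant to $K_1\# K_2$ (Miyazaki), so $\tau(K)=\tau(K_1)+\tau(K_2)$ by concordance invariance and additivity of the Ozsv\'ath--Szab\'o $\tau$; since $K$ tight fibered implies strongly quasipositive implies $\tau(K)=g(K)$ (Hedden, Livingston), squeezing against superadditivity and $\tau\le g$ yields $g(K)=g(K_1)+g(K_2)$.

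Your diagnosis of where the difficulty lies is also inverted. You expect the hard step to be showing the monodromy preserves the cocore of the band, and you propose (without proof) that right-veering ``pins the cocore in place''; this is a hand-wave, and as stated it is not the right dividing line. Once the genus equality $g(K)=g(K_1)+g(K_2)$ is in hand, Kobayashi's theorem on fibered band sums already shows --- using fiberedness alone, no positivity --- that $K_1$ and $K_2$ are fibered and the band realizes a connected sum. So tightness is needed precisely for the genus equality, not for the monodromy step. The concluding paragraph of your proposal (non-triviality of the band sum plus Gabai--Scharlemann forces both summands non-trivial, so $K$ is not prime) is correct and matches the paper, but the body of the argument does not get you there.
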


\begin{corollary}
\label{cor:contrapos}
If a prime knot in $S^3$ is a non-trivial band sum, then it is not a tight fibered knot.
\end{corollary}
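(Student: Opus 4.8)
The plan is to first pin down the fiber surface of $K = K_1 \natural_\beta K_2$ by an additivity-of-genus argument, and then to use tightness to force the connecting band to be trivial. First I would build the ``obvious'' Seifert surface: since $K_1 \sqcup K_2$ is split, I choose minimal genus Seifert surfaces $F_1, F_2$ for $K_1, K_2$ in disjoint balls and attach the band, obtaining an oriented Seifert surface $F = F_1 \cup \beta \cup F_2$ for $K$ with $\chi(F) = \chi(F_1) + \chi(F_2) - 1$, so that $g(F) = g(K_1) + g(K_2)$. On the other hand, Gabai's theorem that Seifert genus is superadditive under band connected sum gives $g(K) \ge g(K_1) + g(K_2)$. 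Combining the two bounds, $g(K) = g(K_1) + g(K_2)$ and $F$ realizes the Seifert genus of $K$.

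Next I would exploit fiberedness. Since $K$ is fibered, its minimal genus Seifert surface is unique up to isotopy and equals the fiber, so $F = F_1 \cup \beta \cup F_2$ is (isotopic to) the fiber surface. The co-core arc $\alpha$ of the band $\beta$ is then a properly embedded essential arc in the fiber whose complement is $F_1 \sqcup F_2$, i.e.\ a separating arc realizing the splitting of $F$ as a boundary connected sum. The band sum expresses $K$ as a connected sum exactly when $\alpha$ is dual to a sphere meeting $K$ in two points, equivalently when the monodromy can be isotoped to preserve the splitting induced by $\alpha$; in that case one reads off the decomposing sphere and $K = K_1 \# K_2$ is composite.

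The final, and hardest, step is to show that tightness forces precisely this. Everything above uses only fiberedness, and in fact applies verbatim to the prime fibered (non-tight) band sum exhibited in the examples, whose monodromy genuinely mixes $F_1$ and $F_2$ across the band; so tightness is exactly what must rule out such mixing. By Hedden's identification of tight fibered knots with fibered strongly quasipositive knots, I would promote $F$ to a quasipositive fiber surface, and then argue---via the right-veering characterization of tight open books (Honda--Kazez--Mati\'{c}), or via Rudolph's realization of $F$ as a subsurface of a positive torus knot fiber---that a non-trivial band would introduce a left-veering arc, equivalently a non-positive band in the quasipositive band structure of $F$, contradicting tightness. This positivity constraint should force the monodromy to fix $\alpha$ up to isotopy, producing the splitting sphere.

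I expect the positivity-to-band-triviality implication to be the main obstacle, since it is the one place where the tight and merely fibered cases diverge: one must convert the global positivity coming from strong quasipositivity into local control of the single band $\beta$ sitting inside the fiber. Once that is in hand, the dual sphere exhibits $K$ as a connected sum, hence not prime, completing the proof; Corollary~\ref{cor:contrapos} is then the immediate contrapositive.
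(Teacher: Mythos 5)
Your opening step contains a genuine error, and it is precisely the point where the whole difficulty of the theorem lives. You claim that attaching the band to minimal genus Seifert surfaces $F_1, F_2$ in disjoint balls yields an embedded Seifert surface $F = F_1 \cup \beta \cup F_2$ for $K$, giving $g(K) \le g(K_1)+g(K_2)$ and hence, with Gabai's superadditivity, genus additivity. This is false: the interior of the band may pierce $F_1$ and $F_2$ in a way that cannot be removed, and the genus of a band sum can strictly exceed $g(K_1)+g(K_2)$. The paper's own Example (the band sum of $T_{2,3}$ with the unknot producing $8_{10}$, which has genus $3 > 1+0$) is a direct counterexample --- and note that $8_{10}$ is fibered, so your subsequent remark that ``everything above uses only fiberedness'' cannot be right either. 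Genus additivity is exactly the conclusion that must be extracted from tightness, and the paper does this via concordance invariants: $K$ is concordant to $K_1 \# K_2$ (Miyazaki), so $\tau(K) = \tau(K_1)+\tau(K_2)$; tight fibered implies strongly quasipositive (Hedden), so $\tau(K) = g(K)$ (Livingston); and then $g(K) = \tau(K) = \tau(K_1)+\tau(K_2) \le g(K_1)+g(K_2) \le g(K)$ forces equality. Without some substitute for this $\tau$-argument your proof does not get off the ground.

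Your final step is also misdirected, though less fatally. Once $g(K) = g(K_1)+g(K_2)$ is known and $K$ is fibered, Kobayashi's theorem already gives that $K_1$ and $K_2$ are fibered and that the band sum is a connected sum; no appeal to tightness, right-veering, or quasipositivity of the fiber is needed at that stage, and your proposed positivity-to-band-triviality argument is left as a sketch (``should force'') precisely where it would need to be carried out. The division of labor in the correct proof is the opposite of what you propose: tightness is spent entirely on establishing genus additivity, and fiberedness alone (via Kobayashi) handles the passage from genus additivity to the connected-sum conclusion.
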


\medskip

A knot $K$ in the $3$--sphere $S^3$ is called an \textit{L-space knot} if 
it admits a nontrivial Dehn surgery yielding an L-space, 
a rational homology sphere whose Heegaard Floer homology is as simple as possible \cite{OSlens}.  
It is a {\em positive} (resp.\ {\em negative}) L-space knot if a positive (resp.\ negative) Dehn surgery yields an L-space. 
In \cite{krcatovicharxiv} Krcatovich proves that if a knot is a connected sum of non-trivial knots, 
then it is not an L-space knot.   
Since positive L-space knots are special types of tight fibered knots \cite{hedden, ni}, 
Theorem~\ref{thm:main} allows us to generalize Krcatovich's result to non-trivial band sums:

\begin{corollary}
\label{cor:main}
If a knot in $S^3$ is a non-trivial band sum, then it is not an L-space knot.
\end{corollary}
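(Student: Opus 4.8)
The plan is to assume that $K$ is a non-trivial band sum which is also an L-space knot and to derive a contradiction, chaining together Theorem~\ref{thm:main}, the fact that positive L-space knots are tight fibered, and Krcatovich's theorem. Write $K = K_1 \natural_\beta K_2$ for the non-trivial band sum. By the definition recalled above, $K$ is then either a positive or a negative L-space knot, and I would dispose of these two cases separately.

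Suppose first that $K$ is a positive L-space knot. Then by the cited work of Hedden and Ni, $K$ is a tight fibered knot. Since $K$ is simultaneously a tight fibered knot and a non-trivial band sum, Theorem~\ref{thm:main} forces the band sum to express $K$ as a connected sum $K = K_1 \# K_2$. The non-triviality of the band sum means that neither $K_1$ nor $K_2$ is the unknot, so $K$ is a connected sum of two non-trivial knots. Krcatovich's theorem then asserts that such a knot is not an L-space knot, contradicting our assumption.

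For the negative case I would pass to the mirror image $\overline{K}$. Mirroring is an orientation-reversing homeomorphism of $S^3$; it sends the band-sum presentation of $K$ to a band-sum presentation of $\overline{K}$ and preserves triviality of band sums, so $\overline{K}$ is again a non-trivial band sum. A negative L-space surgery on $K$ corresponds to a positive L-space surgery on $\overline{K}$ (the mirror of an L-space is an L-space), so $\overline{K}$ is a positive L-space knot; applying the previous paragraph to $\overline{K}$ yields the contradiction. Since essentially all the mathematical weight is carried by the three imported results, the only genuine care required — the main, and admittedly modest, obstacle — is this bookkeeping in the negative case: confirming that mirroring preserves both the non-triviality of the band sum and the L-space property while swapping the positive and negative roles, so that the two cases together exhaust every L-space knot and complete the proof.
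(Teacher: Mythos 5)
Your proof is correct and uses exactly the same three ingredients as the paper (mirroring to reduce to the positive case, Hedden--Ni to get tight fiberedness, Theorem~\ref{thm:main}, and Krcatovich's theorem), differing only in the order of the last two steps: the paper invokes Krcatovich first to conclude $K$ is prime and then applies Theorem~\ref{thm:main} in its contrapositive form, while you apply Theorem~\ref{thm:main} first to obtain the connected-sum decomposition into non-trivial summands and then contradict Krcatovich. This is essentially the same argument, and your extra care with the mirroring bookkeeping is sound.
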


Since lens spaces, and more generally $3$--manifolds with finite fundamental group, are L-spaces \cite{OSlens}, 
Corollary~\ref{cor:main} immediately implies: 

\begin{corollary}
\label{finite_surgery}
Any knot obtained by a non-trivial band sum does not admit a finite surgery. 
\end{corollary}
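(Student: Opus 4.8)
The plan is to argue by contradiction, using Corollary~\ref{cor:main} together with the fact, cited from \cite{OSlens}, that any $3$--manifold with finite fundamental group is an L-space. Suppose $K$ is a knot in $S^3$ obtained by a non-trivial band sum, and suppose toward a contradiction that $K$ admits a finite surgery, that is, a nontrivial Dehn surgery producing a closed $3$--manifold $M$ with finite fundamental group.

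First I would check that such an $M$ is automatically a rational homology sphere: since $\pi_1(M)$ is finite, its abelianization $H_1(M)$ is finite, so $M$ has the rational homology of $S^3$. Invoking \cite{OSlens}, a rational homology sphere with finite fundamental group has Heegaard Floer homology as simple as possible, hence is an L-space. (Lens spaces are the special case of cyclic $\pi_1(M)$.)

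Consequently $K$ admits a nontrivial Dehn surgery yielding an L-space, which is exactly the defining property of an L-space knot. This contradicts Corollary~\ref{cor:main}, which asserts that no knot arising from a non-trivial band sum can be an L-space knot. Therefore $K$ admits no finite surgery, as claimed.

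Since each step is a direct appeal to a previously recorded fact, I do not expect a genuine obstacle here; the corollary is a formal consequence of Corollary~\ref{cor:main}. The only point deserving a moment's care is the bookkeeping that a finite surgery is by convention nontrivial—the meridional filling merely returns $S^3$ and is excluded—so that the L-space it produces genuinely witnesses $K$ being an L-space knot in the sense of the definition given above.
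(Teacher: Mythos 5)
Your argument is correct and is exactly the paper's (one-line) derivation: a finite surgery yields a $3$--manifold with finite fundamental group, which by \cite{OSlens} is an L-space, so the knot would be an L-space knot, contradicting Corollary~\ref{cor:main}. Your extra care about the manifold being a rational homology sphere and the surgery being nontrivial is fine but not a departure from the paper's route.
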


Recall from \cite{hedden, BI} that the set of strongly quasipositive, fibered knots coincides with that of tight fibered knots. 
Thus Theorem~\ref{thm:main} says that any prime knot obtained by a non-trivial band sum fails to satisfy at least one of 
conditions (i) $K$ is strongly quasipositive, and (ii) $K$ is fibered. 
   
In fact, we demonstrate: 

\begin{proposition}
\label{examples}
\begin{enumerate}
\item
There exist tight fibered knots $K_1$ and $K_2$, 
together with a band $\beta$, such that 
$K_1 \natural_{\beta} K_2$ is  prime, strongly quasipositive, but not fibered. 

\item
There exist tight fibered knots $K_1$ and $K_2$, 
together with a band $\beta$, such that 
$K_1 \natural_{\beta} K_2$ is prime, fibered, but not tight \(hence not strongly quasipositive\). 
\end{enumerate}
\end{proposition}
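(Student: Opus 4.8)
The plan is to prove both parts by exhibiting explicit examples, since each assertion is purely existential. In both cases I would take $K_1$ and $K_2$ to be positive torus knots, the prototypical tight fibered knots, choose a concrete band $\beta$ presented by an explicit diagram, and then identify and analyze the resulting band sum $K = K_1 \natural_\beta K_2$. For each example the work reduces to three verifications: that $K$ is prime, that it has the advertised ``positive'' property, and that it fails the complementary one. It is worth noting first that any band sum bounds the Seifert surface $F_1 \cup \beta \cup F_2$ obtained by joining Seifert surfaces of $K_1$ and $K_2$ with the band, and an Euler characteristic count gives $g(F) = g(K_1)+g(K_2)$; this surface is the natural object to control.

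For part $(1)$ I would take the band to be positive, so that $F_1 \cup \beta \cup F_2$ is assembled from parallel disks and positive bands in braided position; by Rudolph's characterization \cite{Rudolph_Topology} this exhibits $K$ as strongly quasipositive, and since quasipositive Seifert surfaces realize the smooth $4$-genus one also concludes $g(K) = g(K_1)+g(K_2)$. To see that $K$ is not fibered I would compute the Alexander polynomial $\Delta_K$ and check that its leading coefficient is not $\pm 1$; should $\Delta_K$ prove inconclusive, I would instead compute $\HFK$ in its top Alexander grading and verify that its rank exceeds one, which rules out fiberedness by Ni's detection theorem \cite{ni}.

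For part $(2)$ I would verify that $K$ is fibered by computing $\HFK(K)$ and applying Ni's theorem \cite{ni}: $K$ is fibered exactly when $\HFK$ in its top Alexander grading has rank one, and that top grading simultaneously records the genus $g(K)$. To force $K$ to be non-tight I would choose the band to carry a negative twist, so that $K$ remains fibered but is no longer strongly quasipositive, and detect the failure through the concordance invariant. By Hedden's characterization \cite{hedden}, a fibered knot is tight precisely when $\tau(K) = g(K)$; thus it suffices to compute $\tau(K)$ from knot Floer homology and verify $\tau(K) < g(K)$.

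The step I expect to be the main obstacle is primeness. Band sums of nontrivial knots tend to produce connected sums or satellites, so manufacturing a genuinely prime example while retaining tight control of the Alexander and Floer invariants is delicate. I would resolve this either by recognizing $K$ as a specific knot in the tables whose primeness and invariants are already recorded, or by a direct argument bounding the intersections of an essential sphere with the band, in the spirit of the prime band sums constructed by Kobayashi \cite{kobayashi2}. Once primeness is in hand, the invariant computations above finish each part; consistency with Theorem~\ref{thm:main} is automatic, since that theorem already forces any such prime band sum to be non-tight, and parts $(1)$ and $(2)$ simply realize the two ways this failure can occur.
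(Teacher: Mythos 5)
Your proposal is a strategy outline rather than a proof: the proposition is purely existential, so its entire content lies in exhibiting concrete knots $K_1$, $K_2$ and a concrete band $\beta$ and then carrying out the verifications, and you never pin these down. The paper's examples are: for part (1), $K_1 = T_{2,3}$ and $K_2 = T_{2,3}^{2,1}$ (the $(2,1)$--cable of the trefoil) joined by a positive band so that the resulting Bennequin-type surface exhibits $K$ as strongly quasipositive; for part (2), $K_1 = T_{2,3}$ and $K_2$ the unknot, banded (following Kobayashi) to produce the prime fibered knot $8_{10}$. You correctly identify primeness as the main obstacle, and the paper confirms this: part (1) requires an entire section developing a family of prime tangles $T^n(\tau)$ and invoking Nakanishi's criterion that a knot admitting a prime tangle decomposition is prime, while part (2) relies on recognizing the result as $8_{10}$. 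Your two suggested routes (table lookup, or a direct essential-sphere argument) are the right ideas, but without a specific diagram neither can be executed, so the hardest step remains entirely open in your write-up.

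There is also a more substantive issue with your proposed verifications of the negative properties. Once primeness and the positive property are established, Theorem~\ref{thm:main} does all the remaining work for free: a prime non-trivial band sum cannot be tight fibered, so a prime strongly quasipositive band sum cannot be fibered (part (1)), and a prime fibered band sum cannot be tight (part (2)). No computation of $\Delta_K$, $\HFK$, or $\tau$ is needed, and the paper uses exactly this shortcut. Worse, your primary test for part (1) --- checking that the leading coefficient of $\Delta_K$ is not $\pm 1$ --- provably fails on the paper's example, which is engineered (via Lemma~\ref{block}) so that $\Delta_K = \Delta_{T_{2,3}}\cdot\Delta_{T_{2,3}^{2,1}}$ coincides with the Alexander polynomial of the L-space knot $T_{3,4}$ and is in particular monic of the correct degree. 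Your fallback of computing $\HFK$ in the top Alexander grading would work in principle but is a substantial computation for an unspecified knot, and it is rendered unnecessary by the theorem you are asked to illustrate. So the gaps to close are: produce the explicit examples, prove their primeness, and replace the invariant computations with the direct appeal to Theorem~\ref{thm:main}.
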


The connected sum of two tight fibered knots is tight fibered. 
On the other hand we show:

\begin{proposition}
\label{nonSQP}
For any tight fibered knots $K_1$ and $K_2$, 
there exists a band $\beta$ such that $K_1 \natural_{\beta} K_2$ is fibered, but not tight \(hence not strongly quasipositive\). 
\end{proposition}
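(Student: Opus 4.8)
The plan is to choose the band $\beta$ so that the natural Seifert surface of the band sum is a Murasugi sum of the fiber of $K_1 \# K_2$ with a \emph{negative} Hopf band. Concretely, let $F_1$ and $F_2$ be fiber surfaces for $K_1$ and $K_2$, placed in disjoint balls, and let $F = F_1 \natural F_2$ be their boundary connected sum, a fiber surface for $K_1 \# K_2$ with monodromy $\phi_1 \circ \phi_2$; since $K_1$ and $K_2$ are tight fibered, the Hedden--Baader-Ishikawa correspondence makes each $\phi_i$ a product of right-handed Dehn twists. I would then plumb a negative Hopf band $H^-$ onto $F$ along a square sitting inside the band of the connected sum, obtaining $\Sigma = F * H^-$. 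The effect on the boundary is purely local: the connected-sum band gets replaced by a clasped band $\beta$ joining $K_1$ to $K_2$, so that $\partial \Sigma = K_1 \natural_\beta K_2$ is a genuine non-trivial band sum while $F_1$ and $F_2$ persist inside $\Sigma$ as the fibers of the two summands.

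Two facts would then finish the argument. First, by Gabai's theorem that a Murasugi sum of fiber surfaces is again a fiber surface, $\Sigma$ fibers the complement of $K_1 \natural_\beta K_2$, so the band sum is fibered. Second, plumbing a negative Hopf band is exactly a \emph{negative} stabilization of the open book $(F, \phi_1\circ\phi_2)$: the monodromy becomes $\phi_1 \circ \phi_2 \circ \tau_c^{-1}$, where $\tau_c^{-1}$ is the left-handed Dehn twist about the core $c$ of $H^-$. A negatively stabilized open book supports an overtwisted contact structure \cite{etnyre}, so $\Sigma$ does not support the tight contact structure on $S^3$; hence $K_1 \natural_\beta K_2$ is fibered but not tight, and by \cite{hedden, BI} it is therefore not strongly quasipositive. (As a consistency check, the same conclusion also follows from Theorem~\ref{thm:main}: the fiber $\Sigma$ has genus $g_1+g_2+1$, so $K_1 \natural_\beta K_2$ cannot be the connected sum $K_1 \# K_2$, and a fibered non-trivial band sum that is not a connected sum cannot be tight.)

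The real work, and the step I expect to be the main obstacle, is the local picture asserted in the first paragraph: that plumbing $H^-$ across the connected-sum band genuinely produces a band sum in the precise sense of the definition, with the clasp of $\beta$ accounting for the jump in genus from $g_1+g_2$ to $g_1+g_2+1$. This increase is what makes the construction possible at all, since a clasped band need not bound the naive genus-$(g_1+g_2)$ surface. One must exhibit the embedded square realizing the plumbing, verify that the resulting band meets $K_1$ and $K_2$ each in a single arc as the definition requires, and confirm that the band sum is non-trivial. The degenerate cases in which $K_1$ or $K_2$ is unknotted (so that $F$ has small genus) should be checked by hand, and there the construction recovers familiar fibered non-tight knots such as the square knot. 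I expect this bookkeeping, rather than either invoked theorem, to be the only genuine difficulty.
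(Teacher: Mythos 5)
Your strategy---fiber via Murasugi sum, detect non-tightness via negativity in the monodromy---is reasonable in spirit, but the construction breaks at exactly the step you flagged as ``bookkeeping'': plumbing a \emph{single} Hopf band onto the fiber surface $F$ of $K_1\# K_2$ cannot produce a knot. A plumbing square $D\subset F$ has two opposite sides $a_1,a_2$ on $\partial F$ and two sides $b_1,b_2$ properly embedded in $F$; because $F$ is orientable and $\partial F$ is connected, coherently orienting $\partial D$ and $\partial F$ forces the four corners to occur on $\partial F$ in the cyclic order $q_1,q_2,q_3,q_4$ (with $a_1$ running $q_1\to q_2$ and $a_2$ running $q_3\to q_4$), and one then checks that $\bigl(\partial F\setminus(a_1\cup a_2)\bigr)\cup\bigl(\partial H^-\setminus(b_1\cup b_2)\bigr)$ closes up into \emph{two} circles, not one. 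The parity is already visible in your own genus count: $\chi(\Sigma)=\chi(F)+\chi(H^-)-\chi(D)=-2(g_1+g_2)$, so $b_1(\Sigma)=2(g_1+g_2)+1$ is odd and $\Sigma$ cannot be a surface with connected boundary; a knot fiber of genus $g_1+g_2+1$ would instead have $\chi=-1-2(g_1+g_2)$. Hence $\partial\Sigma$ is a two-component link and is never of the form $K_1\natural_\beta K_2$. This obstruction cannot be evaded by repositioning the square: any single Hopf-band plumbing onto a knot fiber yields a fibered \emph{link}. (This is the same reason a single stabilization turns the disk open book for the unknot into the Hopf-link open book.) To return to a knot you must Murasugi-sum in a surface of even first Betti number, i.e.\ plumb an even number of Hopf bands, and then the ``negative stabilization implies overtwisted'' shortcut no longer applies verbatim.

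The paper's proof sidesteps all of this: it takes $\beta$ to be the connected-sum band with a square knot $3_1\#\overline{3_1}$ tied into it, so that $K_1\natural_\beta K_2=K_1\#3_1\#\overline{3_1}\#K_2$. This is fibered since each summand is, and since $\tau(3_1\#\overline{3_1})=0$ one gets $\tau(K)=\tau(K_1\# K_2)\le g(K_1\# K_2)<g(K)$, which rules out strong quasipositivity (Livingston's $\tau=g$ for strongly quasipositive knots, \cite{Liv}) and hence tightness. If you want to preserve your monodromy-based picture, the minimal repair is to plumb the fiber of $\overline{3_1}$ (two negative Hopf bands) into the connected-sum band, giving the band sum $K_1\#\overline{3_1}\# K_2$; the boundary is then genuinely a knot and a non-trivial band sum of $K_1$ and $K_2$, but the cleanest certificate of non-tightness is still the inequality $\tau<g$ rather than an appeal to overtwistedness.
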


\section{Proofs of Theorem~\ref{thm:main} and Corollary~\ref{cor:main}}

\begin{proof}[Proof of Theorem~\ref{thm:main}.]
Assume a knot $K=K_1 \natural_\beta K_2$ in $S^3$ is a non-trivial band sum  of the split link $K_1 \sqcup K_2$ of knots $K_1$ and $K_2$ with a band $\beta$.  
Then $g(K) \geq g(K_1)+g(K_2)$ since genus is superadditive for band sums by Gabai \cite[Theorem~1]{gabaiband} and Scharlemann \cite[8.4 Theorem]{Sch}.  
Since  $K$ is concordant to $K_1 \# K_2$ by Miyazaki \cite[Theorem~1.1]{miyazakiband}, 
$\tau(K) = \tau(K_1 \# K_2)$ where $\tau$ is the Ozv\'ath-Sz\'abo concordance invariant \cite{OS}. 
Furthermore, additivity of $\tau$ under connected sum \cite[Proposition~3.2]{OS} shows 
$\tau(K_1 \# K_2) = \tau(K_1) + \tau(K_2)$.  
If we also suppose that $\tau(K) = g(K)$,
then because $\tau$ gives a lower bound on genus for all knots in $S^3$ \cite[Corollary~1.3]{OS}, 
we will have the string of inequalities: 
\[\tau(K) = g(K) \geq g(K_1)+g(K_2) \geq  \tau(K_1) + \tau(K_2) = \tau(K).\]
It then follows that $g(K)=g(K_1)+g(K_2)$.  

\medskip

Now for the proposition, assume the band sum $K$ is a tight fibered knot.  
It follows from Hedden \cite[Proposition~2.1]{hedden} and also   
Baader-Ishikawa \cite[Theorem~3.1]{BI} that $K$ is strongly quasipositive. 
Hence Livingston \cite[Theorem~4]{Liv} shows that $\tau(K) = g(K)$.  
It then follows from the above calculation that we have $g(K)=g(K_1)+g(K_2)$.

Now by Kobayashi \cite[Theorem~2]{kobayashi}, 
both $K_1$ and $K_2$ are fibered,  
and the banding expresses $K$ as the connected sum of $K_1$ and $K_2$, 
i.e.\ there is a $2$--sphere which split $K_1$ and $K_2$ and intersects $\beta$ in a single arc. 
The proof of \cite[Proposition 4.1]{kobayashi} clarifies this last point. 
In particular, $K$ cannot be prime. 
For otherwise, 
$K_1$ or $K_2$ would be a trivial knot and the band sum is trivial, a contradiction to the assumption. 
\end{proof}

\medskip

\begin{proof}[Proof of Corollary~\ref{cor:main}.]
Let $K$ be an L-space knot.  By mirroring, we may assume it is a positive L-space knot.  
Then by combining Hedden and Ni \cite{hedden, ni}, $K$ is a tight fibered knot.  
Krcatovich \cite{krcatovichthesis,krcatovicharxiv} shows that $K$ must be prime.   
Now Theorem~\ref{thm:main} implies $K$ cannot be a non-trivial band sum. 
Note that if $K$ is a positive L-space knot, 
then the equality $\tau(K) = g(K)$ follows by \cite{OSlens}. 
\end{proof}

\section{A family of prime tangles}
In Lemma~\ref{lem:primeK} we show Example~\ref{primeSQPband} of a band sum is prime by demonstrating that it has a prime tangle decomposition and then employing the work of Nakanishi \cite{Nakanishi}.  
The two tangles involved both have a similar form which we generalize to the family of $(n+1)$--strand tangles $T^n(\tau)$ based off a two-strand tangle $\tau$. 
(See Figure~\ref{fig:basictangles} and the discussion below.)  
In Proposition~\ref{prime tangle} we provide hypotheses that ensure the tangle $T^n(\tau)$ is prime for $n\geq1$. 

\medskip
For our purposes, an  {\em $n$--string tangle} is a proper embedding of a disjoint union of $n$ arcs into a ball considered up to proper isotopy; 
$n$ is a positive integer. 
Recall that a tangle $T$ is {\em prime} if it satisfies the following conditions: 

\begin{enumerate}
\item $T$ is not a trivial $1$--string tangle. 
\item $T$ is {\em locally trivial}: Any $2$--sphere that transversally intersects $T$ in just two points bounds a ball in which $T$ is a trivial $1$--string tangle.
\item $T$ is {\em non-splittable}:  Any properly embedded disk does not split $T$. 
\item $T$ is {\em indivisible}: Any properly embedded disk that transversally intersects $T$ in a single point divides the tangle into two subtangles, at least one of which is the trivial $1$--string tangle. 
In the following we say that such a disk is also {\em boundary-parallel}. 
\end{enumerate}

Given a two-strand tangle $\tau$ with fixed boundary so that the endpoints on the left belong to different arcs,  
define the $(n+1)$--strand tangle $T^n(\tau)$ as in Figure~\ref{fig:basictangles} for non-negative integers $n$.  
For convenience, let us take $k=T^0(\tau)=(B,k)$, $T = T^1(\tau) = (B, k \cup a)$, 
and $T' = T^2(\tau) = (B, k \cup a \cup a')$ as also shown in Figure~\ref{fig:basictangles}.  
Here $B$ is the $3$--ball.

\begin{figure}
\centering
\includegraphics[width=4in]{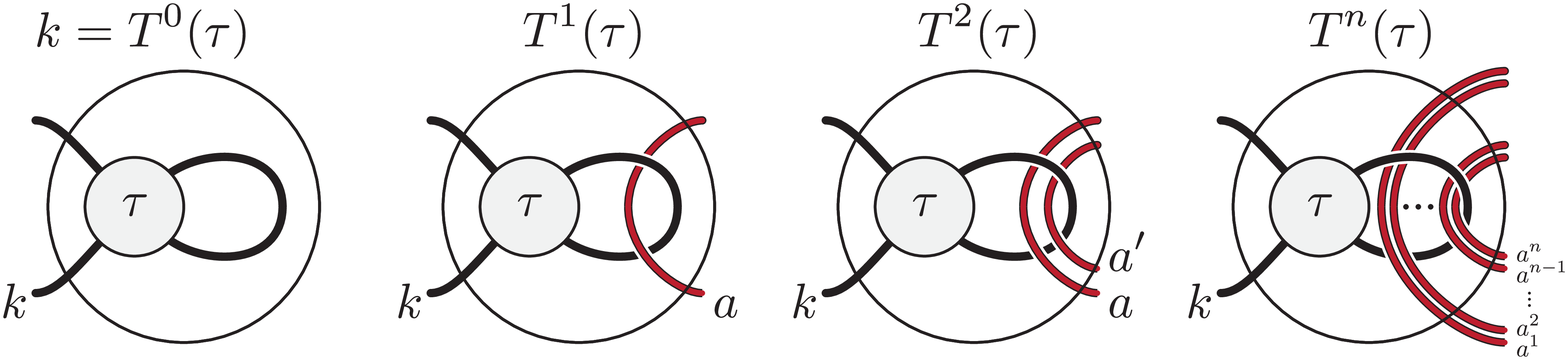}
\caption{The tangles $k=T^0(\tau)$, 
$T = T^1(\tau) = (B, k \cup a)$,  $T' = T^2(\tau) = (B, k \cup a \cup a')$, 
and $T^n(\tau)= (B, k \cup a^1 \cup a^2 \cup \dots a^{n-1} \cup a^n)$.}
\label{fig:basictangles}
\end{figure}

\begin{lemma}
\label{lem:knottedsummand}
If $T=T^1(\tau)=(B, k \cup a)$ is locally non-trivial, 
then $k$ is a knotted arc and $(B, \tau)$ is locally non-trivial, containing a summand of the knotted arc $k$. 
\end{lemma}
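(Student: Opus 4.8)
The plan is to unwind the definition of local non-triviality for $T$ and then to trap the resulting local knot inside $\tau$. Since $T=(B,k\cup a)$ is locally non-trivial, there is a $2$--sphere $S\subset\Int B$ meeting $k\cup a$ transversally in exactly two points and bounding a ball $W$ in which $(W,\,W\cap(k\cup a))$ is a \emph{knotted} $1$--string tangle. Because $S$ meets the tangle in only two points, the arcs $k$ and $a$ are disjoint, and their endpoints lie on $\bdry B$ (hence outside $W$), the intersection $W\cap(k\cup a)$ has no closed components and is a single sub-arc lying either entirely on $k$ or entirely on $a$.

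First I would rule out that this knotted sub-arc lies on $a$. By the construction of $T^1(\tau)$ in Figure~\ref{fig:basictangles} the arc $a$ is unknotted, so no sub-arc of $a$ excised by a two-point sphere can itself be knotted. Therefore the knotted sub-arc lies on $k$, and already this shows that $(B,k)$ is a non-trivial $1$--string tangle; that is, $k$ is a knotted arc, which settles the first assertion.

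For the second assertion I would push the local knot of $k$ down into $\tau$. Recall that $k=T^0(\tau)$ is built from the two strands of $\tau$ by adjoining a trivial closing arc outside the region occupied by $\tau$; let $\Sigma$ be a sphere in $B$ separating this $\tau$--region from the complementary region, which carries only the trivial closing arc together with the trivial arc $a$. After putting $S$ in general position with respect to $\Sigma$ and removing the circles of $S\cap\Sigma$ by innermost-disk surgeries, I would isotope $S$ into the $\tau$--region; such surgeries are available precisely because the complementary region is a trivial tangle and can contribute no knotting. The resulting sphere exhibits $(B,\tau)$ as locally non-trivial, and since the strands of $\tau$ form part of $k$, the local knot it captures is a connect-summand of the knotted arc $k$.

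The step I expect to be the main obstacle is this last localization: one must show that the knotting genuinely resides on the strands of $\tau$ rather than straddling the closing arc, equivalently that the witnessing sphere $S$ can be cleared off $\Sigma$ without altering the captured knot type. The content here is that a trivial closing arc carries no local summand and can be isotoped out of any knotting ball, so that every component of $S\cap\Sigma$ may be removed by standard cut-and-paste on $S$. Once $S$ has been isotoped inside the $\tau$--region, both conclusions follow at once.
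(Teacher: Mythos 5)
Your first assertion is handled correctly: the parity argument forces the two intersection points of the witnessing sphere $S$ with $k\cup a$ to lie on a single component, and a trivial arc has no local knots, so the knotted subarc must lie on $k$. This part matches the paper.

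The second half, however, has a genuine gap, and it sits exactly where you predicted. You propose to clear $S$ off a surface $\Sigma$ separating the ``$\tau$--region'' from the region containing the closing arc and $a$, removing the circles of $S\cap\Sigma$ by ``standard cut-and-paste on $S$'' on the grounds that the complementary region is a trivial tangle. But an innermost circle of $S\cap\Sigma$ may bound a disk (on $\Sigma$ or on $S$) that meets $k$ in one or two points --- your $\Sigma$ meets $k$ twice, so both cases can occur --- and surgering $S$ along such a disk destroys the property that $S$ is a two-point sphere cutting off the knotted subarc of $k$; no appeal to the triviality of the complementary region removes these circles. The paper's proof is organized precisely to defeat this difficulty: it works not with a separating sphere but with the disk $\delta$ spanned by $a$ and an arc of $\partial B$, which meets $k$ only \emph{once}, so that innermost disks of $\delta$ meet $k$ in $0$ or $1$ points. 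The $0$--point case is removed using irreducibility of the arc complement; the $1$--point case is handled by first shrinking $S$ so that the local knot it cuts off is \emph{prime}, then using primeness to isotope the entire ball $B_S$ together with $k\cap B_S$ past the innermost disk. Only after achieving $S\cap\delta=\emptyset$ does one invoke the identification $(B-\nbhd(\delta),\,k\cup a)\cong (B,\tau)$ to conclude that the local knot is a summand of $k$ sitting inside $\tau$. Your outline is missing both the primeness reduction and any mechanism for dealing with intersection disks that meet $k$, so the localization step does not go through as written.
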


\begin{remark}
This summand is not necessarily proper.  
For example, the tangle $\tau$ could be homeomorphic to a split tangle consisting of $k$ and another arc. 
\end{remark}

\begin{proof}
Assume the tangle $(B, k \cup a)$  is locally non-trivial.  
Since $a$ is a trivial arc, 
any local knotting would have to occur in $k$.  
Let $S$ be a sphere bounding a ball $B_S$ that intersects $k \cup a$ in a knotted subarc of $k$. 
By replacing $S$ with a smaller one if necessary, 
we may assume the closure of the knotted $1$--string tangle $(B_S, B_S \cap k)$ is a prime knot. 
Observe that the arc $a$ together with an arc in $\bdry B$ bounds a disk $\delta$ that intersects $k$ exactly once.
We will inductively isotope $S$ and $k$ leaving $a$ invariant so that $S \cap \delta = \emptyset$.

Assume that $S \cap \delta \neq \emptyset$.   
Because $S$ is disjoint from $a$, 
$S \cap \delta$ is a collection of circles.  
A circle of $S \cap \delta$ that is innermost in $\delta$ bounds a subdisk $\delta' \subset \delta$.   

If there is such a disk $\delta'$ that is disjoint from $k$, 
then the circle $\bdry \delta'$ also bounds a subdisk $\sigma \subset S$ that is disjoint from $k$.  
Since $B-\nbhd(k)$ is irreducible (because it is the exterior of the knot $K$, 
the closure of $k$) and $\delta' \cup \sigma \subset B-\nbhd(k)$, 
we may isotope $\sigma$ to $\delta'$ (in the complement of $k \cup a$) and then further isotope $S$ to reduce $|S \cap \delta|$.  
Perform such isotopies until every circle of $S \cap \delta$ bounds a subdisk of $\delta$ that intersects $k$; 
in particular, 
the circles $S \cap \delta$ are concentric circles in $\delta$ about the single intersection point $\delta \cap k$.

Now if $S \cap \delta \neq \emptyset$, 
then the innermost one in $\delta$ bounds a disk $\delta' \subset \delta$ that meets $k$ in a single point and is contained in $B_S$.  
Then $\bdry \delta'$ divides $S$ into two disks $\sigma$ and $\sigma'$ that each intersect $k$ once and $\delta'$ divides $B_S$ into two balls bounded by spheres $\delta' \cup \sigma$ and $\delta' \cup \sigma'$.  
Since we have chosen $S$ so that $(B_S, B_S \cap k)$ is a prime knot, 
one of these balls intersects $k$ in a trivial arc.
We may use these balls to isotope $B_S$ along with $k \cap B_S$ into a collar neighborhood of $\delta'$ and then further to be disjoint from $\delta'$ (and $\delta$).  

Since $S \cap \delta = \emptyset$, $B_S$ is contained in $(B-\nbhd(\delta), k \cup a) \cong (B, \tau)$.  
Hence $\tau$ contains a locally knotted arc which is the summand of $k$ sectioned off by $B_S$.
\end{proof}

\begin{proposition}
\label{prime tangle}
If $k$ is a non-trivial arc without any proper summand in $B$, 
and $\tau$ is locally trivial, 
then $T^n(\tau)$ is prime for $n\geq 1$.
\end{proposition}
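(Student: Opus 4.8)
The plan is to verify the four defining conditions for a prime tangle directly for $T^n(\tau)$, leaning on the disk-and-cutting technology already developed in the proof of Lemma~\ref{lem:knottedsummand} and on the two hypotheses: that the closure $\hat k$ of $k$ is a non-trivial prime knot, and that $\tau$ is locally trivial. Condition $(1)$ is immediate, since $T^n(\tau)$ carries $n+1\ge 2$ strands and so is not a $1$--string tangle at all. For the remaining conditions I would first record, for each auxiliary arc $a^i$, a disk $\delta^i$ bounded by $a^i$ together with an arc of $\bdry B$ and meeting $k$ in a single point, exactly as the disk $\delta$ is produced in the lemma; cutting $B$ along $\delta^i$ deletes $a^i$ and severs $k$ at that single point, which for $T^1$ the lemma identifies with the passage to $(B,\tau)$. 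The recurring move throughout is to put any sphere or disk under consideration into standard position with respect to the family $\{\delta^i\}$ by the innermost-circle argument of the lemma---using irreducibility of the knot exterior to cancel trivial intersection circles, and primeness of any enclosed local knot to push the remaining essential circles across---so that the object of interest comes to lie in the complement of the $\delta^i$ and can be analyzed inside $\tau$ or along a single strand.

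For local triviality (condition $(2)$) I would reduce, via the disks $\delta^i$, to the local triviality of $\tau$. The base case $n=1$ is exactly the contrapositive of Lemma~\ref{lem:knottedsummand}: since $\tau$ is locally trivial, so is $T^1(\tau)$. In general, given a sphere $S$ meeting $T^n(\tau)$ in two points and bounding a ball $B_S$, the two points must lie on $k$ (the $a^i$ being trivial arcs), and after isotoping $S$ off the $\delta^i$ by the innermost-circle argument, $B_S$ comes to lie in the cut-open tangle governed by $\tau$; local triviality of $\tau$ then forces $B_S\cap k$ to be unknotted. The point is that although $k$ itself is knotted, its knotting cannot be localized within $T^n(\tau)$ without crossing some $\delta^i$, and each such crossing would transport a local knot into $\tau$, where none exists.

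For non-splittability (condition $(3)$) I would take a hypothetical splitting disk $D$ disjoint from $T^n(\tau)$, isotope it into standard position against the $\delta^i$, and observe that each $\delta^i$ certifies that $a^i$ and $k$ are linked, since the disk $a^i$ co-spans meets $k$ once; hence no disk disjoint from the tangle can separate any $a^i$ from $k$, every strand lies on one side of $D$, and $D$ cannot split the tangle. For indivisibility (condition $(4)$) I would consider a disk $D$ meeting $T^n(\tau)$ transversally in one point. If that point lies on some $a^i$, triviality of $a^i$ together with the standard-position cleanup shows $D$ cuts off a trivial $1$--string subtangle. The substantive case is when $D$ meets $k$: then $\bdry D$ separates the two endpoints of $k$ on $\bdry B$, and capping $D$ off in the complementary ball turns it into a $2$--sphere meeting $\hat k$ in two points, i.e.\ a connected-sum sphere for $\hat k$. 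Because $\hat k$ has no proper summand, one side of this sphere carries a trivial arc; translating back, one side of $D$ meets $k$ in a trivial arc, and the standard-position argument guarantees that this side is free of the remaining auxiliary arcs, so $D$ is boundary-parallel.

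The step I expect to be the main obstacle is precisely this meeting-$k$ case of indivisibility: one must simultaneously convert $D$ into a genuine connected-sum sphere for $\hat k$ and invoke the no-proper-summand hypothesis to locate a trivial side, while also ruling out that any auxiliary arc $a^j$ is trapped on that trivial side, which forces the innermost-disk cleanup to be carried out against the whole family $\{\delta^i\}$ at once rather than one disk at a time. Reconciling these two demands---keeping $D$ in standard position with respect to all of the $\delta^i$ while performing the connected-sum-sphere reduction for $k$---is where the care is needed; the other conditions reduce fairly mechanically to local triviality of $\tau$ and to the linking data carried by the disks $\delta^i$.
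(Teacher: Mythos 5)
Your checklist for conditions (1)--(3) is sound: (1) is immediate, (2) reduces to local triviality of $\tau$ via Lemma~\ref{lem:knottedsummand} essentially as in the paper, and your linking argument for (3) via the disks $\delta^i$ (each $a^i$ cobounds with an arc of $\bdry B$ a disk meeting $k$ exactly once, so no disk disjoint from the tangle can separate $a^i$ from $k$) is a valid, and arguably cleaner, alternative to the paper's route, which instead observes that splitting $k$ from some $a^i$ would make $(B,k\cup a^i)$ a locally non-trivial tangle because $k$ is a knotted arc.

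The gap is in precisely the case of (4) that you single out as the main obstacle, and the mechanism you propose there does not work. Suppose $D$ meets $k$ once and, by the no-proper-summand hypothesis, cuts off a trivial sub-arc $k_1$ of $k$ in a ball $B_1$, the knotted remainder $k_2$ lying in the complementary ball $B_2$. You claim that putting $D$ in standard position with respect to the family $\{\delta^j\}$ forces every $a^j$ into $B_2$. It does not: after the cleanup each $\delta^j$ lies entirely in whichever ball contains its single intersection point with $k$, and nothing prevents that point from lying on the trivial sub-arc $k_1$; in that case $a^j\subset B_1$, with no conflict with standard position. This configuration must nevertheless be excluded, since then neither side of $D$ would be a trivial $1$--string tangle and $D$ would genuinely divide $T^n(\tau)$. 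The correct way to exclude it --- and what the paper does --- is to invoke condition (2) rather than standard position: if some $a^j$ lay in $B_1$, then a sphere obtained by pushing $\bdry B_2$ slightly into $B_2$ would meet $k$ transversally in two points, be disjoint from $a^j$, and bound a ball in which $k$ is knotted, exhibiting $(B,k\cup a^j)$ as a locally non-trivial tangle and contradicting the local triviality already established in (2). With that substitution your argument closes up; the remaining sub-case where $D$ meets some $a^i$ goes through as you sketch, using (3) to force $k$ and all the other arcs onto one side and the triviality of $a^i$ to conclude that $D$ is boundary-parallel.
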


\begin{proof}
We proceed to check the four conditions of primeness for $T^n(\tau)$:

(1) This is obvious since $T^n(\tau)$ is an $n+1$--string tangle and $n \geq 1$. 

\medskip

(2) If $T^n(\tau)$ were locally non-trivial, then since the arcs $a^1, \dots, a^n$ are trivial arcs, 
the local knotting would have to occur in the component $k$.  
Thus the $2$--string tangle $T=T^1(\tau)=(B, k \cup a)$ would also be locally non-trivial (where we let $a=a^1$).  
Then Lemma~\ref{lem:knottedsummand} shows that this implies that $\tau$ would have to be a locally non-trivial $2$--string tangle containing a summand of $k$.   
This implies that $\tau$ is locally non-trivial, contradicting the assumption.

\medskip

(3)  Assume $T^n(\tau)$ is splittable.  
Then there is a disk $D$ that separates one component of $T^n(\tau)=(B, k \cup a^1 \cup \dots  \cup a^n)$ from another.  
Hence $D$ must separate $k$ from the arc $a^i$ for some $i$.  
Since the arcs $a^1, \dots, a^n$ are mutually isotopic in the complement of $k$, 
we may assume $k \cup a^1$ is splittable.   
However, this implies that $T^1(\tau) = (B, k \cup a^1)$ is locally non-trivial since $k$ is a non-trivial arc, a contradiction to (2).

\medskip

(4) Assume $T^n(\tau)$ is divisible.  
Then there is a disk $D$ transversally intersecting $T^n(\tau)$ in just one point so that $D$ is not boundary-parallel.    
Since the arcs $a^1, \dots, a^n$ are mutually isotopic, 
we may assume either $D$ intersects $k$ or $D$ intersects $a^1$.

If $D$ intersects $k$, then it must cut off a trivial arc from $k$ since $k$ has no proper summand.  
Because $D$ is not boundary-parallel, some arc of $a^1, \dots, a^n$ must be on this side, say $a^i$.  
But then $T^1(\tau)=(B,k \cup a^i)$ is locally non-trivial, a contradiction to (2).

If $D$ intersects $a^1$, then the arcs $a^2, \dots, a^n$ and $k$ must all be on the same side of $D$.  
Otherwise, or some $i \ge 2$, $T^1(\tau) = (B,k \cup a^i)$ would be a splittable $2$--string tangle contrary to (3).  
But now since $a^1$ is a trivial arc, $D$ must be boundary-parallel to the side that does not contain $a^2, \dots, a^n$ and $k$.  
Again this is a contradiction.
\end{proof}

\section{Examples}

In this section, we give examples that prove Propositions~\ref{examples} and \ref{nonSQP}. 
Examples~\ref{primeSQPband} and \ref{fibered_bund sum} give 
Proposition~\ref{examples}(1) and (2) respectively. 
Proposition~\ref{nonSQP} follows from Example~\ref{non-prime non-SQP}. 

\subsection{A prime, non-fibered, strongly quasipositive banding of tight fibered knots}

We show that there exists a banding $K=K_1 \natural_\beta K_2$ of tight fibered knots $K_1$ and $K_2$ which is prime and strongly quasipositive;  
furthermore, $K$ has the Alexander polynomial of an L-space knot. 
It follows from Theorem~\ref{thm:main} (or by Kobayashi \cite{kobayashi}) that $K$ cannot be fibered, 
and thus it is not an L-space knot \cite{ni} (cf.\ Corollary \ref{cor:main}).

\begin{lemma}
\label{block}
If $K=K_1 \natural_\beta K_2$ and $g(K)=g(K_1)+g(K_2)$ then $\Delta_K(t) = \Delta_{K_1}(t) \Delta_{K_2}(t)$.
\end{lemma}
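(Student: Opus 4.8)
The plan is to realize all three Alexander polynomials from a single, well-chosen Seifert surface whose Seifert form is block diagonal. The starting point is the genus hypothesis itself: by superadditivity of genus under band sum (Gabai \cite{gabaiband}, Scharlemann \cite{Sch}) one always has $g(K)\ge g(K_1)+g(K_2)$, and the assumed equality $g(K)=g(K_1)+g(K_2)$ is exactly what should force a minimal genus Seifert surface for $K$ to split along the band.

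First I would produce a minimal genus Seifert surface $F$ for $K$ of the form $F=F_1\cup b\cup F_2$, where $F_1$ and $F_2$ are minimal genus Seifert surfaces for $K_1$ and $K_2$ lying in disjoint balls $B_1,B_2$ (the two sides of a splitting sphere for $K_1\sqcup K_2$) and $b$ is the rectangle of the band $\beta$. Taking disjoint minimal genus surfaces $F_1\subset B_1$ and $F_2\subset B_2$ and attaching $b$ always yields a surface with boundary $K$ of genus $g(F_1)+g(F_2)=g(K_1)+g(K_2)$; the real content is that under the equality hypothesis this surface can be taken to be \emph{embedded} and of minimal genus, i.e.\ the band can be isotoped off the interiors of $F_1$ and $F_2$. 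I expect this to be the main obstacle, and it is precisely where the genus hypothesis is indispensable: in general the band is forced to meet every pair of Seifert surfaces for $K_1$ and $K_2$. Indeed, since $K$ is concordant to $K_1\# K_2$ by Miyazaki, a band sum of two unknots is slice, and such a knot can have nontrivial Alexander polynomial, so the conclusion \emph{fails} without the genus assumption. I would extract the decomposed surface from the equality case of the Gabai--Scharlemann sutured manifold argument, or directly from Kobayashi's analysis of Seifert surfaces of band sums \cite{kobayashi}, which already underlies the proof of Theorem~\ref{thm:main}.

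Granting such an $F$, the remaining computation is routine. Because the band $b$ carries no first homology, there is a splitting $H_1(F)=H_1(F_1)\oplus H_1(F_2)$ with a basis represented by curves lying in $F_1$ and in $F_2$, away from $b$. With respect to this basis the Seifert matrix is block diagonal, $V=V_1\oplus V_2$: a curve supported in $B_1$ and the positive pushoff of a curve supported in $B_2$ lie in disjoint balls and hence have linking number zero, so both off-diagonal blocks vanish, while $V_1$ and $V_2$ are Seifert matrices for $K_1$ and $K_2$. I would then conclude
\[ \Delta_K(t)\doteq\det\!\left(V-tV^{T}\right)=\det\!\left(V_1-tV_1^{T}\right)\cdot\det\!\left(V_2-tV_2^{T}\right)\doteq\Delta_{K_1}(t)\,\Delta_{K_2}(t), \]
where $\doteq$ denotes equality up to a unit $\pm t^{k}$; with the symmetric normalization this gives the asserted equality exactly.
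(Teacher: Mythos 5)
Your proposal is correct and follows essentially the same route as the paper: invoke the equality case of Gabai--Scharlemann superadditivity to obtain a minimal genus Seifert surface of the form $F_1\cup\beta\cup F_2$ with $F_1,F_2$ separated by a sphere, then observe the Seifert matrix is block diagonal and take determinants. The paper's proof is exactly this, stated more tersely; your added justification of why the off-diagonal blocks vanish and why the genus hypothesis is indispensable is sound but not a different argument.
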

\begin{proof}
By Gabai \cite{gabaiband}  and Scharlemann \cite{Sch}, 
when $g(K) = g(K_1)+g(K_2)$ for the band sum $K=K_1 \natural_\beta K_2$, 
then there are minimal genus Seifert surfaces $F, F_1, F_2$ for the knots $K, K_1, K_2$ such that $F$ is the union of $F_1, F_2$ and the band $\beta$.  
Since $F_1$ and $F_2$ are  separated by a sphere, 
$F$ has a Seifert matrix that is block diagonal of the Seifert matrices for $F_1$ and $F_2$.   
The result then follows.
\end{proof}

Proposition~\ref{examples}(1) follows from the example below. 

\begin{example}
\label{primeSQPband}
Let $K_1$ be the $(2, 3)$--torus knot $T_{2, 3}$ and $K_2$ the $(2, 1)$--cable of the $(2, 3)$--torus knot
$T_{2, 3}^{2, 1}$; $K_2$ wraps twice in a longitudinal direction and once in a meridional direction along the companion $T_{2, 3}$. 
Note that $K_1$ and $K_2$ are both strongly quasipositive fibered knots, and hence tight fibered knots \cite{hedden}.
The left-hand side of Figure~\ref{fig:SQPbandings} shows the split link $K_1 \sqcup K_2$.  
The right hand side of Figure~\ref{fig:SQPbandings} shows the banding $K = K_1 \natural_\beta K_2$ which, 
by virtue of its presentation is also strongly quasipositive. 

\begin{figure}
\centering
\includegraphics[width=5in]{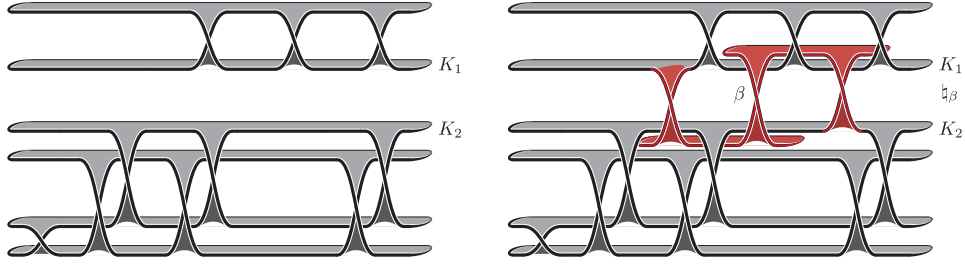}
\caption{The $(2,3)$--torus knot $K_1$ and the $(2,1)$--cable of the $(2,3)$--torus knot $K_2$ are strongly quasipositive knots.  
They may be banded together to form another strongly quasipositive knot $K=K_1 \natural_\beta K_2$.}
\label{fig:SQPbandings}
\end{figure}

By Lemma~\ref{block}, it now follows that $\Delta_K(t) = \Delta_{K_1}(t) \Delta_{K_2}(t) = (t-1+t^{-1})(t^2-1+t^{-2})$.   
Indeed, let us note that we constructed $K$ so that $\Delta_K(t)$ is also the Alexander polynomial of an L-space knot, 
namely the L-space knots $T_{3, 4}$ and $T_{2, 3}^{2, 3}$, 
i.e.\ the $(3, 4)$--torus knot and the $(2, 3)$--cable of $(2, 3)$ torus knot.   

Lemma~\ref{lem:primeK} below uses tangle theory to shows that $K = K_1 \natural_\beta K_2$ is a prime knot.
Hence by Theorem~\ref{thm:main} (or by Kobayashi \cite{kobayashi}), $K$ is not fibered.
\end{example}

\begin{lemma}
\label{lem:primeK}
The knot $K = K_1 \natural_\beta K_2$ on the right-hand side of Figure~\ref{fig:SQPbandings} is a prime knot.
\end{lemma}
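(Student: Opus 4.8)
The plan is to realize $K$ as the union of two prime tangles glued along a Conway sphere, and then invoke Nakanishi's theorem \cite{Nakanishi}, which guarantees that a knot admitting a decomposition into two prime tangles is itself prime. The work of the preceding section is set up precisely so that each piece can be recognized as a tangle of the form $T^n(\tau)$ and shown prime by a direct appeal to Proposition~\ref{prime tangle}.

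First I would locate a $2$--sphere $S$ in $S^3$, meeting $K$ transversally, that cuts the diagram on the right-hand side of Figure~\ref{fig:SQPbandings} into two balls, exhibiting $K$ as the sum of two tangles $A$ and $B$. The natural choice separates the trefoil companion $K_1 = T_{2,3}$ (together with the portion of the band $\beta$ on its side) from the cabling $K_2 = T_{2,3}^{2,1}$; since $K_2$ wraps twice in the longitudinal direction, $S$ meets $K$ in enough points that each of $A$ and $B$ is a tangle with at least two strings. I would then identify each of $A$ and $B$ with a tangle $T^{n}(\tau)$ for some $n \geq 1$ and some two-strand tangle $\tau$, matching the template of Figure~\ref{fig:basictangles}: the core arc $k$ on each side is a knotted arc whose closure is a trefoil, while the remaining arcs $a^1, \dots, a^n$ are the trivial, mutually parallel strands arising from the cabling and the band. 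With this identification in hand, I would verify the two hypotheses of Proposition~\ref{prime tangle}. The core arc $k$ is non-trivial with no proper summand because its closure is the trefoil, a prime knot; and the background tangle $\tau$ is locally trivial, which can be read off directly from the figure since it contains no local knotting. Proposition~\ref{prime tangle} then shows that both $A$ and $B$ are prime tangles, and applying Nakanishi \cite{Nakanishi} to the decomposition $K = A \cup_S B$ yields that $K$ is prime.

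I expect the main obstacle to be the identification step: carefully reading the tangle decomposition off Figure~\ref{fig:SQPbandings} and confirming that each of the two pieces is genuinely of the templated form $T^{n}(\tau)$ with $\tau$ locally trivial. This is most delicate on the cable side, where the number of strands and their braiding are most intricate, and where one must be sure that the putative core arc really is the (unknotted-summand-free) trefoil companion and that no hidden local knotting has been absorbed into $\tau$. Once both pieces are matched to the template, the remaining verification of primeness is purely formal, being nothing more than an invocation of the already-established Proposition~\ref{prime tangle} followed by Nakanishi's gluing result.
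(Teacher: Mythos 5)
Your overall strategy is the paper's: split $K$ along the sphere separating $K_1$ from $K_2$ (it meets $K$ in six points, giving two three-string tangles $T_1$ and $T_2$), identify each $T_i$ with a tangle $T^2(\tau_i)$ as in Figure~\ref{fig:basictangles}, verify the hypotheses of Proposition~\ref{prime tangle}, and finish with Nakanishi. But two points in your verification are off, and one is a genuine gap. First, the core arcs are not both trefoils: $k_i$ has the knot type of $K_i$, so on the cable side the core arc is the $(2,1)$--cable $T_{2,3}^{2,1}$ itself, not its trefoil companion. This happens to be harmless for the ``no proper summand'' hypothesis, since the cable is also a prime knot, but it matters for the next point.

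Second, and more seriously, your claim that local triviality of $\tau$ ``can be read off directly from the figure since it contains no local knotting'' fails on the cable side: one of the two strands of $\tau_2$ is a genuinely knotted arc (the paper identifies its knot type as $8_{20}$), so $\tau_2$ is not a trivial tangle and its local triviality is not visible by inspection. Establishing it is the real content of the lemma, and is exactly what Lemma~\ref{lem:knottedsummand} was set up to deliver: any local knot of $\tau_2$ would be a summand of $k_2$, and since $k_2$ has no proper summand it would have to have the knot type of $K_2 = T_{2,3}^{2,1}$; on the other hand, a local knot of $\tau_2$ must be a summand of one of its two strands, whose types are the unknot and $8_{20}$, and neither admits $T_{2,3}^{2,1}$ as a summand. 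Hence $\tau_2$ is locally trivial. Without an argument of this kind, your proof does not establish the hypothesis of Proposition~\ref{prime tangle} on the cable side, and the appeal to that proposition (and hence to Nakanishi) is not yet justified.
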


\begin{proof}
Discarding the Seifert surface followed by a small isotopy presents $K = K_1 \natural_\beta K_2$ as on the left-hand side of Figure~\ref{fig:splittangles}.
The sphere separating $K_1$ and $K_2$ that intersects $K$ in $6$ points 
(shown first as a horizontal line) splits $K$ into two $3$--string tangles $T_1$ and $T_2$ as shown in Figure~\ref{fig:splittangles}.  
Then Figure~\ref{fig:twotangles} shows, for each $i=1,2$, 
the results of further isotopies expressing $T_i$ as the tangle $T^2(\tau_i) = k_i \cup a_i \cup a_i'$ for some $2$--string tangle $\tau_i$ such that $k_i$ is a knotted arc of knot type $K_i$ while $a$ and $a'$ are trivial arcs that are isotopic in the complement of $k_i$.  
(See Figure~\ref{fig:basictangles}.)  
Note that the knot $K_i$ is the closure of $\tau_i$ while the arc $k(\tau_i)$ is the ``half-closure" of $\tau_i$.  

\begin{figure}
\centering
\includegraphics[width=5in]{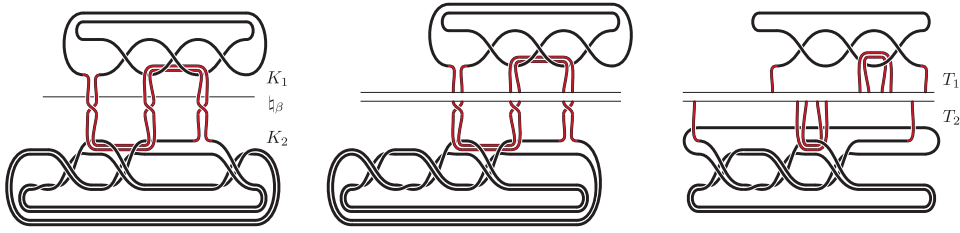}
\caption{The splitting sphere of $K_1 \sqcup K_2$ divides $K=K_1 \natural_\beta K_2$ into two three strand tangles, $T_1$ and $T_2$.  The right-hand picture of the tangles gives isotopic versions in the center.}
\label{fig:splittangles}
\end{figure}

\begin{figure}
\centering
\includegraphics[width=4in]{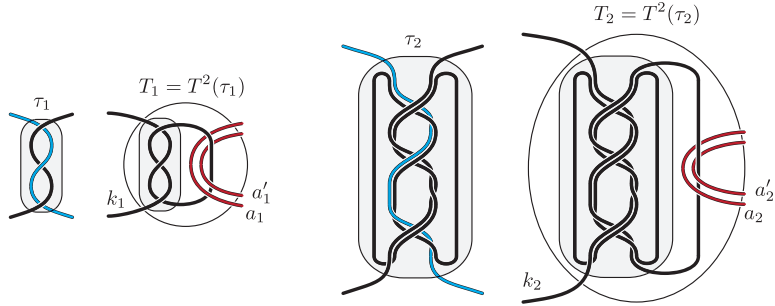}
\caption{A further isotopy of the two tangles $T_1$ and $T_2$ exhibits them as the tangles $T^2(\tau_1)$ and $T^2(\tau_2)$.}\label{fig:twotangles}
\end{figure}

Since $K_i$ is a non-trivial prime knot for each $i=1,2$, 
then the arcs $k_i$ are non-trivial without any proper summand.  
Because $\tau_1$ is a trivial tangle, it is locally trivial.  
Because $k_2$ has no proper summand, if $\tau_2$ were locally non-trivial, 
the local knotting would have the knot type of $k_2$. 
However since the two one-strand subtangles of $\tau_2$ are a trivial arc and a knotted arc which one may identify as having the knot type of $8_{20}$ which is not the type of $k_2$, 
$\tau_2$ is locally trivial.  
(Figure~\ref{fig:twotangles} highlights one strand in each $\tau_1$ and $\tau_2$ so that the other strand is more easily discerned.)
Thus Proposition~\ref{prime tangle} shows that $T_1$ and $T_2$ are both prime tangles.
Work of Nakanishi then implies that $K$ is a prime knot \cite{Nakanishi}.
\end{proof}

\medskip

\subsection{A prime, fibered,  non-strongly quasipositive banding of tight fibered knots}

We show that there exists a banding $K = K_1 \natural_\beta K_2$ of tight fibered knots $K_1$ and $K_2$ which is prime, fibered, 
but not tight, and hence not strongly quasipositive. 

\begin{example}
\label{fibered_bund sum}
Take $K_1 = T_{2, 3}$ and the trivial knot $K_2$, each of which is a tight fibered knot. 
Let $K_1 \sqcup K_2$ be a split link, and $\beta$ a band given in Figure~\ref{fig:8_10}. 
Then the band sum $K = K_1 \natural_\beta K_2$ is the prime knot $8_{10}$.   
(This band sum presentation of $8_{10}$ was given by Kobayashi \cite{kobayashi2}.)
Recall that $K$ is a Montesinos knot $M(\frac{1}{2}, \frac{1}{3}, \frac{2}{3})$ and it is easy to see that 
$K$ is a plumbing of two torus knots $T_{2, 5}$ and $T_{2, -3}$. 
Hence it is a fibered knot \cite[Theorem~1]{Stallings}. 
By Theorem~\ref{thm:main} $K$ is not tight, and hence it is not strongly quasipositive. 
\end{example}

\begin{figure}[h]
\centering
\includegraphics[width=1.8in]{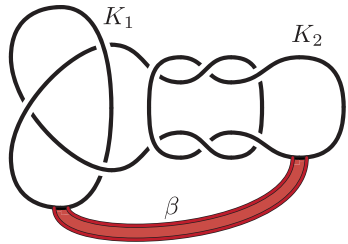}
\caption{$K_1 = T_{3, 2}$, $K_2 = \mathrm{unknot}$, and 
$K=K_1 \natural_\beta K_2$ is $8_{10}$.}
\label{fig:8_10}
\end{figure}

\subsection{A non-prime, fibered, non-strongly quasipositive banding of tight fibered knots.}
Recall that the connected sum of tight fibered knots is always tight fibered.
In this subsection we show that for any tight fibered knots $K_1$ and $K_2$, 
we can take a band $\beta$ so that $K_1 \natural_{\beta} K_2$ is fibered, but not tight, 
and hence not strongly quasipositive. 

\begin{example}
\label{non-prime non-SQP}
Figure~\ref{fig:bandsquareknotsum} shows a band $\beta$ for a split link of any two knots $K_1$ and $K_2$ 
so that the band sum $K=K_1 \natural_\beta K_2$ produces the connected sum $K = K_1\# 3_1 \# \overline{3_1} \# K_2$.   
Since the square knot $3_1 \# \overline{3_1}$ is fibered, 
if $K_1$ and $K_2$ are fibered then $K$ will be fibered.  
 However since $3_1 \# \overline{3_1}$ is a non-trivial ribbon knot, 
$\tau(3_1 \# \overline{3_1})=0$, 
hence $\tau(K_1 \# 3_1 \# \overline{3_1} \# K_2) = \tau(K_1  \# K_2) \le g(K_1 \# K_2) < g(K_1 \# 3_1 \# \overline{3_1} \# K_2)$. 
This shows $K$ cannot be strongly quasipositive, and in particular it is not tight. 
Thus, choosing $K_1$ and $K_2$ to be tight fibered knots gives the example.

\begin{figure}[h]
\centering
\includegraphics[width=2in]{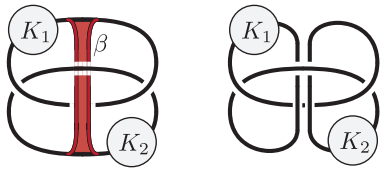}
\caption{The band $\beta$ for the split link $K_1 \sqcup K_2$  produces the band sum $K_1 \natural_\beta K_2 = K_1\# 3_1 \# \overline{3_1} \# K_2$. }
\label{fig:bandsquareknotsum}
\end{figure}
\end{example}

\section{Further discussion and questions}
Our results and  proofs lead to a few natural questions.

\subsection{Band sums of strongly quasipositive knots}

\begin{question}
If $K = K_1 \natural_\beta K_2$ is a strongly quasipositive knot, must $K_1$ and $K_2$ be strongly quasipositive?   
\end{question}

Since $\tau(K) = g(K)$ for strongly quasipositive knots, 
the argument in the proof of Theorem~\ref{thm:main} enables us 
to conclude that $g(K)=g(K_1)+g(K_2)$. 
Then the equality implies that there are minimal genus Seifert surfaces $F_1, F_2$ for the knots $K_1, K_2$ that are disjoint from the band $\beta$ so that $F = F_1 \cup \beta \cup F_2$ is a minimal genus Seifert surface for $K$; 
see Gabai \cite[Theorem~1]{gabaiband} and Scharlemann \cite[8.5 Remark]{Sch}.
If $F$ is a quasipositive Seifert surface, 
then it follows that $F_1$ and $F_2$ are also quasipositive Seifert surfaces (because they are subsurfaces of $F$) and hence $K_1$ and $K_2$ are strongly quasipositive knots.

From this point of view, an affirmative answer to this question would follow from an affirmative answer to the following. 

\begin{question}
\label{ques:mingenquasipos}
If $F$ is a minimal genus Seifert surface for a strongly quasipositive knot, must $F$ be a quasipositive surface?
\end{question}

Let us note that the Kakimizu complex for minimal genus Seifert surfaces for a knot is connected \cite{kakimizu}.  
Thus if the answer to this question is negative, then there is a strongly quasipositive knot $K$ with a quasipositive Seifert surface $Q$ and a non-quasipositive minimal genus Seifert surface $F$ such that $F \cap Q = \bdry F = \bdry Q = K$.

\subsection{Band sums of split links}
The band sum operation can be generalized as follows. 
Start with a split link with $n$ components $K_1, \dots, K_n$ 
(where for each component there is a sphere separating it from all of the other components),  
and connect the components via $n-1$ pairwise disjoint bands to obtain a knot $K$. 
We call $K$ a {\em band sum} of $K_1, \dots, K_n$; see \cite{miyazakiband}.  
As an analogy of the case $n = 2$, 
we say that a band sum is {\em trivial} if $K$ coincides with one of $K_i$ ($1 \le i \le n$). 

\begin{question}
\label{generalization}
\begin{enumerate}
\item
If a prime knot $K$ in $S^3$ is a non-trivial band sum of $K_1, \dots, K_n$, 
then can $K$ be a tight fibered knot? 
\item 
If a knot $K$ in $S^3$ is a non-trivial band sum of $K_1, \dots, K_n$, 
then can $K$ be an L-space knot? 
\end{enumerate}
\end{question}

We expect a negative answer to both of these questions.  
Since the relation $g(K_1) + \dots + g(K_n) \geq \tau(K_1) + \dots + \tau(K_n) = \tau(K)$ holds, 
if one follows the scheme of our proof of Theorem~\ref{thm:main}, 
it becomes a question of whether the ``superadditivity of genus'' for such band sums holds true and whether there is a  result similar to Kobayashi's work on fibering and band sums.

\bigskip

\begin{footnotesize}
	\bibliographystyle{plain}
	\bibliography{BakerMotegi-bandsum}
\end{footnotesize}

\end{document}